\newtheorem{example}{Example}[section]
\title{Projection process with definable right-hand side\thanks{This work was partially supported by
the Sofia University "St. Kliment Ohridski" fund "Research \& Development"
under contract 08/26.03.2015} and by the Bulgarian National Scientific Fund under Grant DFNI-I02/10.} 
\author{Mira Bivas\footnotemark[2]\ \footnotemark[3]
\and Nadezhda Ribarska\footnotemark[2]\ \footnotemark[4]\ \footnotemark[5]
}
\begin{document}
\maketitle
\slugger{sicon}{xxxx}{xx}{x}{x--x}

\renewcommand{\thefootnote}{\fnsymbol{footnote}}

\footnotetext[2]{Faculty of Mathematics and Informatics,  Sofia University, James
Bourchier Boul. 5,  1126 Sofia, Bulgaria}
\footnotetext[3]{email: mira.bivas@gmail.com}
\footnotetext[4]{Institute of
Mathematics and Informatics, BAS, G.Bonchev str. 8, 1113 Sofia,Bulgaria}
\footnotetext[5]{email: ribarska@fmi.uni-sofia.bg}

\renewcommand{\thefootnote}{\arabic{footnote}}

\begin{abstract}
We study the relation between sweeping processes with the cone of limiting normals and projection processes. We prove the existence of solution of a perturbed sweeping process with the cone of limiting normals and of nonstationary projection process, provided the sets involved are definable (and are moving in a definable way) in some o-minimal structure. An application to a crowd motion model is presented.
\end{abstract}

\begin{keywords}sweeping process, differential inclusion with nonconvex right hand side, o-minimal structures\end{keywords}

\begin{AMS}34A60 · 49J21 · 03C64\end{AMS}

\pagestyle{myheadings}
\thispagestyle{plain}
\markboth{Mira Bivas and Nadezhda Ribarska}{Projection process with definable right-hand side}

\section{Introduction}
The classical sweeping process has been introduced and thoroughly
studied in the 70s by J.J. Moreau  (cf., e.g. \cite{M1}). General
motivation arising from Mechanics appeared in \cite{M2} and extensive mechanical models can also be found in \cite{M3}.
The mathematical formulation of a sweeping process  is  the following
constrained differential inclusion
\begin{equation}\label{sweep}
\begin{array}{l} \dot x(t) \in -N_{C(t)} (x(t))\\ x(0) = x_0
\in C(0)\\ x(t) \in C(t),\end{array}
\end{equation}
where $C(t)$ is a given moving closed set and $N_{C(t)} (x(t))$ is
the normal cone (in some sense) to $C(t)$ at $x(t)$. In the papers by Moreau mentioned above the sets $C(t)$ are convex subsets of a Hilbert space and they are moving in an absolutely continuous way.

Two years later the first paper by Moreau the study of resource
allocation mechanisms led C.Henry (c.f. \cite{H1}, \cite{H}) to the differential inclusion
\begin{equation}\label{projection}
\begin{array}{l} \dot x(t) \in Proj_{T_C (x(t))} F(x(t))\\ x(0) = x_0
\in C\\ x(t) \in C ,\end{array}
\end{equation}
where $C$ is a closed convex set in a finite dimensional space, $T_C(x(t))$ is the tangent cone to $C$ and $F$ is an upper semicontinuous multivalued mapping with nonempty compact convex values. Soon the close relation between these two problems has been recognised. B.Cornet (\cite{Cornet}) proved the equivalence of (\ref{projection}) and the sweeping process (\ref{sweep}) with the perturbation $F$ added to the right-hand side and constant set $C$ which is assumed to be Clarke regular.

The investigation of the sweeping process has been carried out under
different assumptions on the phase space, on the geometry of the moving set, on the way the set is moving, on the possible perturbations etc. Nowadays there exists an extensive literature
on the subject. The reader is referred to \cite{Thib} and the references therein for a detailed discussion. The sweeping process with perturbation $F$ added to the right-hand side for prox-regular sets in a Hilbert space is studied in \cite{EdmThib1}, \cite{EdmThib2}. The connection of this problem (with stationary prox-regular set) to the projection process (\ref{projection}) is done in \cite{Serea}. The differential inclusion \eqref{projection} appeared again in a crowd motion model (\cite{BerVen}, \cite{BerVen2}) and has been shown again to be equivalent to a sweeping process in the case of lack of obstacles due to the prox-regularity of the sets involved.

We are interested in investigating these problems when the geometry of the sets is not regular - that is, when the cone of proximal normals and the Clarke normal cone may not coincide. In this case the right-hand side of (\ref{projection}) may not be upper semicontinuous. That is why we will consider the problem (\ref{projection}) when the the graph of its right-hand side is the closure of the graph of the original projection mapping:
\begin{equation}\label{projection_pr_stationary}
\begin{array}{l} \dot x(t) \in  G(F(x(t)),x(t))\\ x(0) = x_0
\in C\\ x(t) \in C ,\end{array}
\end{equation}
where $C$ is a closed subset of $\mathbb{R}^n$ and the multivalued mapping $G$ is obtained by closing the graph of
$(d,x)\mapsto Proj_{T_C(x)}(d)$ (here $T_C(x)$ is the Bouligand tangent cone of $C$ at $x$).
We are going to refer to (\ref{projection_pr_stationary}) as "projection process". This problem is closely related to a sweeping process with perturbation, where the normal cone is assumed to be the cone of limiting normals:
\begin{equation}\label{sweep_new_stationary}
\begin{array}{l} \dot x(t) \in -N_{C} (x(t)) +F(x(t))\\ x(0) = x_0
\in C\\ x(t) \in C,\end{array}
\end{equation}
where $C$ is a closed set in $\mathbb{R}^n$ and $N_{C} (x)$ is the cone of limiting normals to $C$ at $x$. Let us point out that now the right-hand sides of (\ref{projection_pr_stationary}) and (\ref{sweep_new_stationary}) are upper semicontinuous mappings with possibly nonconvex values which makes their investigation a lot harder. Proposition 6.7 on p.219 from \cite{RockWets} (see also Lemma 3.8 from \cite{GeorgRibarska}) shows that the right-hand side of (\ref{sweep_new_stationary}) contains the right-hand side of (\ref{projection_pr_stationary}). We do not know whether it is true that (\ref{sweep_new_stationary}) has a solution for arbitrary closed set $C$ even in the case when the perturbation $F(x)$ is single-valued and constant. We give an example showing that it is possible the projection process (\ref{projection_pr_stationary}) (with constant single-valued perturbation) to have no solution while the sweeping process (\ref{sweep_new_stationary}) to admit one. Thus, the problems (\ref{projection_pr_stationary}) and (\ref{sweep_new_stationary}) are no longer equivalent.

In order to include the case of moving obstacles in the crowd motion model, as well as to have a closer relation to the classical sweeping process, we are going to study a more general sweeping process

\begin{equation}\label{sweep_new}
\begin{array}{l} \dot x(t) \in -N_{C(t)} (x(t)) +F(x(t), t)\\ x(0) = x_0
\in C(0)\\ x(t) \in C(t),\end{array}
\end{equation}
where $C(t)$ is a moving closed set in $\mathbb{R}^n$ and $N_{C(t)} (x)$ is the cone of limiting normals to the set $C(t)$ at the point $x$, and a more general projection process

\begin{equation}\label{projection_process}
\begin{array}{l} \dot x(t) \in Pr \ G(F(x(t), t), x(t), t)\\ x(0) = x_0
\in C(0)\\ x(t) \in C(t) ,\end{array}
\end{equation}
where $C(t)$ is a moving closed set in $\mathbb{R}^n$, $Pr: \mathbb{R}^{n+1}\to \mathbb{R}^n$ assigns to a $(n+1)$-dimensional vector the vector of its first $n$ coordinates and the multivalued mapping $G$ is obtained by closing the graph of
$$(d,x,t)\mapsto Proj_{T_K(x,t)\cap \{ t=1\}}(d), \ \ K:= \{ (x,t)\in R^{n+1}: \ x\in C(t)\}.$$
This is the natural way of extending (\ref{projection_pr_stationary}) from a stationary set to the nonstationary situation. See Example \ref{ex} and the last section for further discussion on this point.

We impose some additional conditions on the geometry of the sets involved and on the perturbation in order to be able to prove some existence results for the sweeping process and for the projection process. The condition imposed on the sets is defina\-bi\-lity in some o-minimal structure. Definability implies the existence of a Whitney stratification (see Definition \ref{Whitney} below). Dynamical systems with stratified domains have been studied in \cite{BreHong}, \cite{BarWol} and many others. Our problem could be considered as an weak invariance problem on a stratified domain, but we do not impose any conditions on the subdomains (while proximal smoothness and wedgeness are assumed in \cite{BarWol}). Moreover, in both papers an additional Structural Condition (SC) on the dynamics is assumed, while we prove the respective property for a specific regularization.

 Under the definability hypothesis an existence result for (\ref{sweep_new_stationary}) has been obtained in the paper \cite{GeorgRibarska} provided the perturbation is single-valued and continuous. In the same paper the existence of solution to (\ref{sweep}) is proved if the multimapping $C(t)$ is definable and Lipschitz (with respect to the Hausdorff distance). Now we are able to extend these results to the problem (\ref{sweep_new}) under the same assumptions. In fact, the definability assumption on $C$ (and continuity and single-valuedness of the perturbation) yields the existence of solution of the projection process (\ref{projection_process}) (and therefore of (\ref{projection_pr_stationary})) as well. The existence of solution to (\ref{projection_pr_stationary}) has been announced in \cite{GeorgRibarska} in a remark, but the proof there is not complete. Thus, Theorem \ref{thProj} is new (with respect to \cite{GeorgRibarska}), even in the stationary case.

The paper is organized as follows. Preliminaries on o-minimal structures and some basic definitions of nonsmooth analysis are in Section 2. An example where the projection process (\ref{projection_pr_stationary}) has no solution appears in Section 3. The main existence results for (\ref{sweep_new}) and (\ref{projection_process}) are in Section 4. An application to crowd motion model and some additional motivation of (\ref{projection_process}) are gathered in Section 5.

\section{Preliminaries}

Definable and tame sets, functions, and mappings are a product of
model theory and algebraic geometry; they are the main concepts of
the theory of so-called "o-minimal structures" that has been
actively developing during last $20 -– 25$ years (c.f., e.g.
\cite{Coste}, \cite{vanDries}). Applications of this theory to
optimization problems are becoming increasingly popular because the
classes of sets and mappings involved are, on one hand side, broad
enough to encompass a big part of the important applications and, on
the other hand side, small enough to avoid "pathologies" like
fractals.

\begin{definition} A structure (expanding the real closed field R) is a
collection ${\cal S} = ({\cal S}^n)_{n\in N}$, where each ${\cal
S}^n$ is a set of subsets of the  space $R^n$, satisfying the
following axioms:
\\ 1. All algebraic subsets of $R^n$ are in ${\cal S}^n$. (Recall that an algebraic set
is a subset of $R^n$ defined by a finite number of polynomial
equations
$$P_1(x_1, . . . , x_n) = . . . = P_k(x_1, . . . , x_n) = 0 .)$$
 2. For every $n$, ${\cal S}^n$ is a Boolean subalgebra of the powerset $2^{R^n}$ of $R^n$.
\\ 3. If $A \in {\cal S}^m$ and $B \in {\cal S}^n$, then $A \times B \in {\cal S}^{m+n}$.
\\ 4. If $p : R^{n+1} \longrightarrow R^n$ is the projection on the first $n$ coordinates
and $A \in {\cal S}^{n+1}$, then $p(A) \in {\cal S}^n$.

The elements of ${\cal S}^n$ are called the definable subsets of
$R^n$. The structure $\cal S$ is said to be o-minimal if, moreover,
it satisfies: \\ 5. The elements of ${\cal S}^1$ are precisely the
finite unions of points and intervals.
\end{definition}

In this work we always assume that the closed field $R$ coincides
with the field of the real numbers. Standard examples of o-minimal
structures are the semialgebraic sets (finite unions of sets defined
by finitely many algebraic equalities and inequalities), globally
subanalytic sets (this class contains all bounded sets which are
finite unions of sets defined by finitely many analytic equalities
and inequalities).

Any o-minimal structure enjoys magnificent stability properties,
e.g. the closure and the interior of a definable subset of $R^n$ are
definable; the image of a definable set by a definable map (i.e.
whose graph is a definable set) is definable. In fact, every
"reasonably" defined set, that is the definition uses finite
combination of quantifiers, is definable (provided quantified
variables range over definable sets).  The following definition and
the subsequent theorem make this precise:

\begin{definition} A first-order formula is constructed
recursively according to the following rules.\\ 1. If $P$ is a
polynomial of $n$ variables, then $$P(x_1, \dots ,
x_n)=0 \mbox{ and } P(x_1, \dots , x_n)>0$$ are first-order formulas. \\
2. If $A$ is a definable subset of $R^n$, then $x\in A$ is a
first-order formula. \\
3. If $\Phi(x_1, \dots , x_n)$ and $\Psi(x_1, \dots , x_n)$ are
first-order formulas, then "$\Phi$ and $\Psi$", "$\Phi$ or $\Psi$",
"not $\Phi$", "$\Phi \Longrightarrow \Psi$" are first-order
formulas. \\
4. If $\Phi(y,x)$ is a first-order formula (where $y=(y_1, \dots ,
y_p)$ and $x=(x_1, \dots , x_n)$) and $A$ is a definable subset of
$R^n$, then "$\exists  \ x\in A: \ \Phi(y,x)$" and "$\forall \  x\in A: \
\Phi(y,x)$" are first-order formulas.
\end{definition}

\begin{theorem}If $\Phi(x_1, \dots , x_n)$ is a first-order formula, the
set of all vectors $(x_1, \dots , x_n)$ in $R^n$ which satisfy
$\Phi(x_1, \dots , x_n)$ is definable.
\end{theorem}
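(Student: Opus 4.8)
The plan is to argue by structural induction on the recursive construction of the formula $\Phi$, following the four clauses of the definition of a first-order formula. At each stage I want to show that the solution set $\{(x_1,\dots,x_n) : \Phi(x_1,\dots,x_n)\}$ belongs to the appropriate $\mathcal{S}^n$, using only the structure axioms 1--4; note that o-minimality (Axiom 5) plays no role here.

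For the base cases I would first treat the atomic formulas. The set $\{x : P(x)=0\}$ is by definition an algebraic subset of $R^n$, hence definable by Axiom 1, and $\{x : x\in A\}=A$ is definable by hypothesis. The strict inequality $P(x)>0$ is the only base case requiring a small trick: in a real closed field every nonnegative element is a square, so $P(x)\ge 0$ holds iff there exists $y$ with $P(x)=y^2$. The zero set $\{(x,y)\in R^{n+1} : P(x)-y^2=0\}$ is algebraic, hence definable, and applying the projection Axiom 4 to drop the last coordinate yields that $\{x : P(x)\ge 0\}$ is definable; then $\{x : P(x)>0\}=\{P\ge 0\}\setminus\{P=0\}$ is definable by the Boolean algebra Axiom 2.

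For the inductive step on clause 3, since $\Phi$ and $\Psi$ carry the same variable list, their solution sets live in the same $R^n$, and "and", "or", "not" correspond to intersection, union and complement, all of which preserve definability by Axiom 2; "$\Phi\Rightarrow\Psi$" is handled by rewriting it as "(not $\Phi$) or $\Psi$". The quantifier clause 4 is where the real work lies. Given $\Phi(y,x)$ with solution set $S=\{(y,x)\in R^{p+n} : \Phi(y,x)\}$ definable by the inductive hypothesis, I would write $\{y : \exists x\in A,\ \Phi(y,x)\}$ as the image of $S\cap(R^p\times A)$ under the projection forgetting the $x$-coordinates. Here $R^p\times A$ is definable by Axiom 3 (noting that $R^p$ is itself definable, e.g. as an algebraic set), and the intersection is definable by Axiom 2; since Axiom 4 removes only the last coordinate, I would iterate it $n$ times to project out $x_n,\dots,x_1$ in turn, leaving a definable subset of $R^p$. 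The universal quantifier then comes for free via the equivalence "$\forall x\in A:\Phi$" $\equiv$ "not $(\exists x\in A:\ \text{not}\ \Phi)$", reducing it to the existential and negation cases already established.

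The main obstacle I anticipate is bookkeeping rather than conceptual: matching the single-coordinate projection permitted by Axiom 4 with the possibly several variables that a quantifier removes, and correctly encoding the range restriction $x\in A$ through the product and intersection axioms before projecting. Once this is set up cleanly, the induction closes on all four clauses and the theorem follows.
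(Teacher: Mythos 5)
The paper itself gives no proof of this theorem: it is stated as standard background on o-minimal structures, quoted from the literature (cf.\ Coste's notes and van den Dries's book, which the paper cites), so there is no in-paper argument to compare yours against. Your proof is correct and is essentially the standard argument one finds in those sources: structural induction over the four clauses, with Axiom 2 handling the connectives and iterated single-coordinate projections handling the quantifiers after the range restriction $x\in A$ is encoded via Axiom 3 and Axiom 2. The one genuinely non-mechanical point --- that $\left\{ x : P(x)>0 \right\}$ is not handed to you by Axiom 1 and must be manufactured --- is resolved correctly by your square trick: in a real closed field $P(x)\ge 0$ holds iff $\exists y\ \bigl(P(x)=y^2\bigr)$, so $\left\{ P\ge 0\right\}$ is the Axiom 4 projection of the algebraic set $\left\{ (x,y) : P(x)-y^2=0\right\}$, and $\left\{ P>0\right\} = \left\{ P\ge 0\right\} \setminus \left\{ P=0\right\}$ is definable by Axiom 2. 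The reductions of ``$\Phi\Rightarrow\Psi$'' to negation and disjunction and of ``$\forall$'' to ``$\neg\exists\neg$'' are sound, and the bookkeeping issue you flag is settled exactly as you say: since the quantified block $x=(x_1,\dots,x_n)$ occupies the last $n$ coordinates of $R^{p+n}$, composing the last-coordinate projection of Axiom 4 with itself $n$ times projects out all of $x_n,\dots,x_1$ while staying inside the structure at every step. You are also right that o-minimality (Axiom 5) is never used; the theorem holds for any structure expanding the real closed field.
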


The topology of definable sets is also "tame". Any definable subset
of $R^n$ has a partition into finitely many definable subsets each
of which is definably arcwise connected (see Theorem 3.9 in
\cite{Coste}). Let us recall also the Monotonicity theorem (Theorem
2.1, \cite{Coste}): Let $f:(a,b)\longrightarrow R$ be a definable
function. Then there exists a finite subdivision of $(a,b)$ such
that on each open interval in this subdivision $f$ is continuous and
either constant or strictly monotone. Moreover, for every $k\in N$
the function $f$ is piecewise $C^k$-smooth.

The most important (from our point of view) feature of the sets
belonging to some o-minimal structure is the highly nontrivial fact
that they admit a regular $C^k$-Whitney stratification for any $k\in
N$:

\begin{definition} \label{Whitney}
 Let $A\subset R^n$ and $k\in N$. We say that $A$ admits a
 regular $C^k$-Whitney stratification if there exists a finite
partitioning of $A$ into $C^k$ manifolds $\{ M_i\}_{i=1}^{i_0}$
(called strata) such that

- if $M_j \cap \overline{M_i} \not = \emptyset$, then $M_j \subset
\overline{M_i}$;

- if $x \in M_j$ and $x_k \in M_i$ converge to $x$ as
$k\longrightarrow \infty$, then $T_x M_j$, the tangent space to
$M_j$ at $x$, is contained in the lower limit of $T_{{x_k}} M_i$.
\end{definition}

Lower limit ($\liminf$) and upper limit ($\limsup$) of sets are
understood in Kuratowski sense in this note. We will denote by
$B_r(x)$ (resp. $\overline{B}_r(x)$) the open (resp. closed) ball in
$\mathbb{R}^n$ with center $x$ and radius $r$.

A multivalued map is called \textit{definable} if its graph is a definable set.

Let us recall the basic concepts of normal cones to a closed set $C
\subset \mathbb{R}^n$ at
some point $x\in C$. A vector $\zeta \in \mathbb{R}^n$ is said to be a {\it
proximal normal} to $C$ at $x\in C$ if there exists a positive real
$t$ such that the metric projection of the point $x+t\zeta$ on $C$
coincides with $x$. The cone of all proximal normals to $C$ at $x$
is denoted by $\widehat N_C(x)$. Closing the graph of the mapping
$x\mapsto \widehat N_C(x)$ we obtain the mapping $x\mapsto  N_C(x)$
assigning to each point $x\in C$ the {\it cone of  limiting normals}
 $N_C(x)$ to $C$ at $x$. The cone
$\overline{\mbox{co}} \ N_C(x)$ is said to be the {\it Clarke normal
cone} to $C$ at $x$.

\section{Example}

We are going to construct a closed set $C$ in $\mathbb{R}^2$ such that the projecting process (\ref{projection_pr_stationary}) with $F\equiv (1,1)$ starting at the
origin has no solution, but the respective sweeping process (\ref{sweep_new_stationary}) admits one.

The construction of the closed set $C$ follows a Cantor-like
procedure. Let us us fix the positive reals $a\not =b$ with $a+b=1$.
 We start by building the "base cell" $K$.
 $$K:= \left\{ (x,0): \ x\le a \right\} \cup
\left\{ (x,b): \ x\le a\right\}\cup $$ $$\cup
\left\{ (0,y): \  y\le 0 \right\} \cup \left\{
(a,y): \ y\le b \right\} \ .$$
The so defined set has the property that at every point of $K$
except for the vertex $(a,b)$ the cone of limiting normals is contained in the set
$N:=\{ (t,0): t\in \mathbb{R} \} \cup \{ (0,t): t\in \mathbb{R} \}$ 
of vectors along the coordinate axes and the projection of the drift term $(1,1)$ on the Bouligand tangent cone to $K$ is either $(1,0)$ or $(0,1)$. The differential inclusions
(\ref{projection_pr_stationary}) and (\ref{sweep_new_stationary}) with $K$ instead of $C$  have  unique solution starting at any point $x_0\in K$ (and inevitably ending at the "upper vertex" $(a,b)$).

Let us fix $q\in (0,\frac{1}{3})$. We construct the set $C$ by
induction. We begin by placing the set $q {K}$ in the
middle of the segment $[(0,0), (a,b)]$, that is $$C_1:=
\frac{1-q}{2}(a,b) + q.{K}$$ is our set on the first
step. We put $\displaystyle{T_1:=\left(\frac{1-q}{2},
\frac{1+q}{2}\right)}$. To continue with the second step we consider
the set $\{ (ta, tb): \ t\in [0,1]\setminus T_1 \}$. It is
contained in the segment $[(0,0), (a,b)]$ and consists of two
intervals. To obtain $C_2$, we add to $C_1$ two copies of $q^2
{K}$ placed in the middle of these two intervals, that is we put
$$C_2:= C_1 \cup \left( \left( \frac{1-q}{4}- \frac{q^2}{2}\right) (a,b)
+ q^2 {K} \right) \cup \left( \left( \frac{3+q}{4}-
\frac{q^2}{2}\right) (a,b) + q^2 {K} \right)$$
$$\mbox{and } T_2:=T_1 \cup \left( \frac{1-q}{4}- \frac{q^2}{2},
\frac{1-q}{4}+ \frac{q^2}{2}\right) \cup \left( \frac{3+q}{4}-
\frac{q^2}{2}, \frac{3+q}{4}+ \frac{q^2}{2}\right) \ .$$ If we have
constructed $C_n$ for some positive integer $n$, we build $C_{n+1}$
by taking the union of $C_n$ and $2^{n}$ copies of the set $q^{n+1}
{K}$ placed in the middle of all $2^{n}$ closed intervals
in $\{ (ta, tb): \ t\in [0,1]\setminus T_n \}$ and define
$T_{n+1}$ accordingly.

Define $C$ to be the closure of the set
$\displaystyle{\bigcup_{n=1}^\infty C_n}$ and $T$ to be the union
$\displaystyle{\bigcup_{n=1}^\infty T_n}$. We built $C$ in such a
way that it preserves the property of $K$ that at every point of $C$
except for the vertex $(a,b)$  the 
cone of limiting normals is contained in the set $N$ of vectors along the coordinate
axes. Moreover, the projection of the drift term $(1,1)$ on the Bouligand tangent cone at any point of $C$
except for the vertex $(a,b)$ is contained in the three-element set $\{ (1,0),(0,1),(1,1)\}$.

We claim that there is no solution of the projection process (\ref{projection_pr_stationary}) with $x_0=(0,0)$. Indeed, let us denote by $\bar
y(\cdot)$ the line which on each cell coincides with the unique
solution of (\ref{projection_pr_stationary}) starting at the "lower vertex". Note that
our assumption $a+b=1$ yields that the time
interval, for which  $\bar y (t)$ belongs to a cell, exactly
corresponds to some interval in
$T=\displaystyle{\bigcup_{n=1}^\infty T_n}$.  Thus we may assume
 that  $\bar y (t)$ belongs to the segment
$[(0,0), (a,b)]$ for every $t\in [0,1]\setminus T$.  The Cantor set
$[0,1]\setminus T$ is of positive Lebesgue measure. More precisely,
$$meas\left([0,1]\setminus T\right) = \frac{1-3q}{1-2q} \ .$$
Sure, the sets $$\left\{ x_1\in [0,a] : x_1=at, \ t\in \left([0,1]\setminus T\right)\right\}$$ and $$\left\{ x_2\in [0,b] : x_2=bt,  t\in \left([0,1]\setminus T\right)\right\}$$ are of positive Lebesgue measure as well. 
As the coordinates of $\bar y (\cdot)$ are monotone increasing,
$\bar y (\cdot)$ is differentiable almost everywhere. Therefore the
set
$$A:=\left\{ t\in [0,1]\setminus T : \ \dot{\bar y}
(t) \mbox{ exists }\right\}$$ is of positive Lebesgue measure. It is
clear that $\dot{\bar y} (t)$  is collinear to $(a,b)$ for every
$t\in A$. 

Now let us assume that $y(t)$, $t\in [0,T']$ is a solution of the projection process (\ref{projection_pr_stationary}) with $x_0=(0,0)$. Then the line $\{ y(t): t\in [0,T']\}$ coincides with the line $\{ \bar y(t): t\in [0,T]\}$. Indeed, the solution of the projection process is unique on a cell (and the trajectory ends on the diagonal) and if a point has one of its coordinates in the Cantor set, but it is not on the diagonal, the only solution of the projection process starting at it is to glide to the diagonal because of a connectedness argument. Thus any deviation from the line $\{ \bar y(t): t\in [0,T]\}$ is impossible. Moreover, as $y(t)$ is absolutely continuous, the set of the values of $t\in [0,T']$, for which $y(t)$ is on the diagonal $[(0,0), (a,b)]$ is of positive Lebesgue measure. Thus on this set $\dot y(t)$ should be collinear to $(a,b)$. On the other hand,
$$G((1,1),x)\subset \{ (1,0),(0,1),(1,1)\}  \mbox{ for every }x\in C \ .$$
As $\{ (1,0),(0,1),(1,1)\} \cap \{ \lambda (a,b): \ \lambda\in \mathbb{R}\} = \emptyset$ (because $a\not = b$), this is a contradiction. 

With suitable change of time variable $\bar y(t)$ is a solution to the sweeping process (\ref{sweep_new_stationary}), because $((1,1)-N)\cap \{ \lambda (a,b): \ \lambda\in \mathbb{R}\} \not =\emptyset$.

\section{Main existence results}

\begin{theorem} \label{main}
Let the multivalued mapping $C: [0,T] \rightrightarrows \mathbb{R}^n$ be
Lipschitz (with respect to the Hausdorff distance), definable in some o-minimal structure and let its values $C(t)$ be nonempty and
compact. Let the mapping $d:\mathbb{R}^{n+1} \to \mathbb{R}^n$ be continuous. Then the sweeping process
\begin{equation} \label{sweepPert}
\begin{array}{l}
    \dot x(t) \in d(x(t), t) -N_{C(t)} (x(t))\\
    x(0) = x_0 \in C(0)\\
    x(t) \in C(t) \ ,
\end{array}
\end{equation}
where  $N_{C(t)} (x(t))$ denotes the cone of limiting normals to $C(t)$ at $x(t)$, has a solution.
\end{theorem}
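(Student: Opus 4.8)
The plan is to move the problem into the augmented space $\mathbb{R}^{n+1}$, exploit the Whitney stratification furnished by definability, produce approximate trajectories by a catching-up (Moreau-type) regularization, and then pass to a limit; the delicate point will be that the limit must land in the \emph{nonconvex} limiting normal cone, and this is exactly where the stratification is indispensable.

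First I would introduce the graph $K:=\{(x,t)\in\mathbb{R}^{n+1}:\ x\in C(t)\}$. As $C$ is definable, the condition ``$x\in C(t)$'' is a first-order formula in the defining data of $C$, so $K$ is definable and, by the stratification result quoted just before Definition~\ref{Whitney}, admits a finite regular $C^k$-Whitney stratification $K=\bigcup_i M_i$. The hypotheses that $C$ is Lipschitz for the Hausdorff distance and has nonempty compact values guarantee that all candidate trajectories remain in one fixed compact set, that each section $C(t)$ is reachable from the neighbouring ones without jumps, and --- most importantly --- that any trajectory I build will be Lipschitz in $t$, hence absolutely continuous and differentiable almost everywhere.

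Next I would discretize. For a partition $0=t_0^m<\dots<t_{N_m}^m=T$ of mesh tending to $0$ I set $x^m(t_0^m)=x_0$ and define the catching-up iterates $x^m(t_{i+1}^m)\in\mathrm{Proj}_{C(t_{i+1}^m)}\bigl(x^m(t_i^m)+(t_{i+1}^m-t_i^m)\,d(x^m(t_i^m),t_i^m)\bigr)$, interpolating linearly in between. By the defining property of the metric projection, the scaled increment $x^m(t_i^m)+(t_{i+1}^m-t_i^m)d(\cdot)-x^m(t_{i+1}^m)$ is a proximal normal to $C(t_{i+1}^m)$ at $x^m(t_{i+1}^m)$, i.e. lies in $\widehat N_{C(t_{i+1}^m)}(x^m(t_{i+1}^m))$. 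The Lipschitz modulus of $C$ and the bound on the continuous map $d$ over the compact set yield a uniform Lipschitz estimate for the $x^m$, so Arzel\`a--Ascoli gives a subsequence converging uniformly to a Lipschitz curve $x(\cdot)$, with $\dot x^m\rightharpoonup\dot x$ weakly in $L^1$. The constraint $x(t)\in C(t)$ and the initial condition pass to the limit by closedness of $K$.

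The remaining, and genuinely hard, step is to verify $\dot x(t)-d(x(t),t)\in -N_{C(t)}(x(t))$ for almost every $t$, and I expect this limit-passage inside a nonconvex cone to be the crux of the whole argument. The usual device --- weak $L^1$-convergence of the correction terms together with Mazur's lemma and the closed graph of the normal-cone map --- only places the limit in the convexified (Clarke) cone $\overline{\mathrm{co}}\,N_{C(t)}(x(t))$, which is strictly larger than $N_{C(t)}(x(t))$ precisely when the geometry is irregular: a weak limit of proximal normals need not remain in the nonconvex limiting cone. This is where definability must be used. With the finite Whitney stratification I would show that for almost every $t$ the point $x(t)$ sits in a single stratum $M_i$, that there the admissible velocity is tangent to $M_i$ while the correction direction is normal to it, and that the Whitney regularity condition (the containment of $T_xM_j$ in the lower limit of the $T_{x_k}M_i$ in Definition~\ref{Whitney}) lets me take limits of the proximal normals along the discretization without leaving the limiting normal cone. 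The tameness of the data is what keeps the exceptional set of ``transition'' times --- where the trajectory moves from one stratum to an adjacent one and the reduction above could fail --- from having positive measure; this is exactly the mechanism absent in the Cantor-type set of Section~3, where infinitely fine oscillation destroys existence. Assembling these facts yields the inclusion almost everywhere and hence a solution of \eqref{sweepPert}. Once it is in place, one may further observe, via Proposition~6.7 on p.~219 of \cite{RockWets} and Lemma~3.8 of \cite{GeorgRibarska}, that the same trajectory solves the tighter projection process as well, though that is not required for the present statement.
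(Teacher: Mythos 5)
Your overall skeleton (pass to the graph $K\subset\mathbb{R}^{n+1}$, use definability to get a finite Whitney stratification, build approximate trajectories, use a.e.\ tangency of the limit to the strata) matches the paper's, and the catching-up scheme with Arzel\`a--Ascoli would indeed produce a Lipschitz curve lying in $K$ with the right initial condition. The gap is in the step you yourself single out as the crux, and the device you invoke there cannot close it. Note first that limits of proximal normals to a \emph{fixed} set at nearby points lie in the limiting normal cone by its very definition, with no Whitney regularity needed; so ``Whitney regularity lets me take limits of the proximal normals without leaving the limiting normal cone'' addresses a non-problem. The two genuine obstructions are: (i) your scheme produces proximal normals to the \emph{different} sets $C(t_{i+1}^m)$, and upper limits of such normals as $s\to t$ need not lie in $N_{C(t)}(x(t))$ --- the paper handles this via Lemmata 3.6 and 3.7 of \cite{GeorgRibarska}, which identify $N_{C(t)}(x)$ with $Pr(N_K(x,t))$ off a countable set of times, and you never make this reduction; and, fatally, (ii) the correction velocities converge only weakly in $L^1$, so by Mazur the a.e.\ limit $\zeta(t):=d_1(y(t))-\dot y(t)$ lands merely in the \emph{convexified} cone. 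Tangency to the stratum does not repair this: $\zeta(t)$ is horizontal and $d_1(y(t))-\zeta(t)\in T_{y(t)}S_{i}$ only confines $\zeta(t)$ to the affine set $d_1(y(t))-\bigl(T_{y(t)}S_{i}\cap\{t=1\}\bigr)$, which is positive-dimensional whenever $\dim S_i\ge 2$; it pins $\zeta(t)$ down only if one also knows $\|\zeta(t)\|\le \mbox{dist}\bigl(d_1(y(t)),T_{y(t)}S_{i}\cap\{t=1\}\bigr)$, whereas catching-up gives only the crude bound of order $L+2\max\|d\|$. And even where $\zeta(t)$ is pinned down, membership in $\overline{\rm co}\,(Pr(N_K(y)),0)$ does not imply membership in $(Pr(N_K(y)),0)$: a weak limit produced by chattering between two genuine limiting normals can be a strict convex combination outside the nonconvex cone. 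This is exactly the relaxation phenomenon for nonconvex inclusions, and nothing in your argument excludes it; as written, your construction proves existence only for the Clarke-cone (convexified) sweeping process.

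What the paper adds, and what your proposal is missing, is precisely a mechanism that makes de-convexification legitimate: the truncated velocity map $V(y)=d_1(y)-(Pr(N_K(y)),0)\cap (L(y)\overline{B})$, with cutoff $L(y)\le \mbox{dist}\bigl(d_1(y),T_{y}S_{i_{y}}\cap\{t=1\}\bigr)$. One proves $V$ is upper semicontinuous (this is where Whitney regularity actually enters, via \eqref{dist}), gets a solution of the convexified inclusion \eqref{convexified} from the weak invariance theorem (Theorem 2.10, p.~193 of \cite{Clarke}), with feasibility supplied by Lemma \ref{catchUpLemma} and Proposition \ref{claim}, and then de-convexifies: every element of $V(y_0)$ lies in the ball of radius $L(y_0)$ about $d_1(y_0)$ inside the slice $\{t=1\}$, and the affine slice $T_{y_0}S_{i_{y_0}}\cap\{t=1\}$ meets that ball in at most the single point $Proj_{T_{y_0}S_{i_{y_0}}\cap\{t=1\}}d_1(y_0)$; since $\dot y(t_0)$ lies in $\overline{\rm co}\,V(y_0)$ \emph{and} in the tangent space, it must be that extreme point of the ball, and an extreme point of a uniformly convex ball belonging to the closed convex hull of a closed subset of that ball must belong to the subset itself --- hence $\dot y(t_0)\in V(y_0)$. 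Your scheme has no counterpart of the truncation, so no counterpart of this argument is available. Either import the truncation-plus-strict-convexity machinery (at which point the discretization buys you nothing over the viability theorem), or find a genuinely new way to rule out chattering in the catching-up iterates; without one of these, the proposal does not prove Theorem \ref{main}.
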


\begin{proof}
\textit{\textbf{Step I.} Increasing the dimension by the time variable}

Let us denote the graph of $C$ by $$K:= \{(x,t)\in \mathbb{R}^{n+1}: \ t \in [0,T] , \ x\in C(t)\} \, .$$ The assumptions of the theorem yield that $K$ is definable and therefore it admits a regular Whitney stratification $\{ S_i\}_{i\in I}$.
We denote by $Pr: \mathbb{R}^{n+1}\to \mathbb{R}^n$ the projection on first $n$ coordinates, by $Pr_t: \mathbb{R}^{n+1}\to \mathbb{R}$ the projection on the $(n+1)$-th coordinate and by $L$ the Lipschitz constant of the multivalued mapping $C$. We denote $d_1(x, t) := (d(x,t),1)$, if $(x, t)\in K$ (i.e. $x \in C(t)$). It is clear that $d_1: K \to \mathbb{R}^{n+1}$ is a continuous mapping. Since $C$ is Lipschitz (with respect to the Hausdorff distance), it can easily be obtained that $K$ is compact.

Lemmata 3.6 and 3.7 from \cite{GeorgRibarska} yield that there exists a set $\hat{T} \subset [0,T] \, ,$ which is at most countable and such that $N_{C(t)}(x) \equiv Pr (N_K(x,t))$ for every $x\in C(t)$ and $t\not \in \hat{T}$. This means that the existence of a solution $x(t), \, t \in [0, T]$ to (\ref{sweepPert}) is equivalent to the existence of a solution to
\begin{equation}\label{n+1}
\begin{array}{l}
    \dot y(t) \in d_1(y(t)) - (Pr N_K (y(t)), 0)\\
    y(0) \in C(0) \times \{0\}\\
    y(t) \in K \ ,
\end{array}
\end{equation}
where $y(t) = (x(t), t) \, .$

\textit{\textbf{Step II.} Constraining the right-hand side of (\ref{n+1})}

Let $S_{i_y}$ be the stratum (from the fixed Whitney stratification of $K$), to which $y$ belongs, and $T_y S_{i_y}$ be the tangent space to $S_{i_y}$. We define the multivalued mapping
$$V(y):= d_1(y) - (Pr (N_K(y)),0) \cap \left( L(y) \overline{B}\right) \ \subset \mathbb{R}^{n+1} \ , $$
where
$$L(y):= \left\{
\begin{array}{ll}
    \mbox{dist}(d_1(y),T_{y}S_{i_{y }} \cap \{t=1\}) \, , \\ \quad \quad \quad \quad \mbox{ if } \mbox{dist}(d_1(y),T_{y}S_{i_{y }} \cap \{t=1\}) \le L + \|Pr(d_1(x, t))\|\\
    L + \|Pr(d_1(x, t))\| \, , \\ \quad \quad \quad \quad \mbox{ if } \mbox{dist}(d_1(y),T_{y}S_{i_{y }} \cap \{t=1\}) > L + \|Pr(d_1(x, t))\| \\
                         \quad \quad \quad \quad \mbox{ or } T_{y}S_{i_{y }} \cap \{t=1\} = \emptyset \, .
\end{array}\right.$$
The mapping $V$ is defined correctly on $K$.

Let us check that $V$ is upper semicontinuous. Since $K$ is compact and $d_1$ is continuous, this is equivalent to proving the closedness of its graph. Let $y_m=(x_m, t_m)\to y_0=(x_0, t_0)$ and $v_m \to v_0$, $v_m\in V(y_m)$. Because $d_1$ is continuous and $Pr N_K$ is upper semicontinuous, from $v_m \in d_1(y_m) - (Pr (N_K(y_m)),0)$ it follows that $v_0 \in d_1(y_0) - (Pr (N_K(y_0)),0).$

We denote by $S_j$, $j\in J$, all strata such that $S_{i_{y_0}}\subset \overline{S_j}$. The sequence $\{y_m\}_{m=1}^\infty$ can be split into finitely many subsequences (or finite sets), such that any of them is contained in one stratum $S_j$ for some $j\in J$ or in $S_{i_{y_0}}$. Thus, without loss of generality, we may assume that $\{ y_{m}\}_{m=1}^\infty$ is contained in one stratum. If it is $S_{i_{y_0}}$, obviously $ v_0$ belongs to $V(x_0)$. If $\{ y_{m}\}_{m=1}^\infty$ is contained in $S_j$ for some $j\in J$, in order to conclude that $v_0\in V(y_0)$, it remains to prove that $L(y_0) \ge \limsup_{m\to \infty} L(y_m)$.

Since $L(y) = \min(L + \|Pr(d_1(y))\|, \mbox{dist}(d_1(y),T_{y}S_{i_{y }} \cap \{t=1\}))$, it is enough to check
\begin{equation}\label{dist}
    \mbox{dist}(d_1(y_0),T_{y_0}S_{i_{y_0 }} \cap \{t=1\}) \ge \limsup_{m\to \infty} \mbox{dist}(d_1(y_0),T_{y_{m }}S_{ j} \cap \{t=1\}) \, ,
\end{equation}
because $$\mbox{dist}(d_1(y_0),T_{y_{m}}S_{j}\cap \{t=1\}) \ge \mbox{dist}(d_1(y_m),T_{y_{m}}S_{j}\cap \{t=1\}) - \| d_1(y_m)-d_1(y_0)\|$$
would yield
$$\limsup_{m\to \infty} \mbox{dist}(d_1(y_0),T_{y_{m}}S_{  j}\cap \{t=1\}) \ge \limsup_{m\to \infty} \mbox{dist}(d_1(y_m),T_{y_{m}}S_{  j}\cap \{t=1\}) \, . $$

Let us denote $A_m = T_{y_{m }}S_{  j} \cap \{t=1\}$, $m\in \mathbb{N}$ and $A = T_{y_{0 }}S_{  j_{y_0}} \cap \{t=1\}.$ According to Corrolary 4.7 on p.113 in \cite{RockWets}, it would suffice to prove $A \subset \liminf_{m\to\infty} A_m$. Let $a \in A$ and $U \subset \mathbb{R}^{n+1}$ be an open neighbourhood of $a$. Hence, there exist $V \subset \mathbb{R}^{n}$ -- an open neighbourhood of $Pr(a)$ and $\varepsilon > 0, \ \varepsilon < \frac{1}{2}$, such that $2V\times (1-\varepsilon,1+\varepsilon) \subset U$. Then $ \tilde{U}=V\times (1-\varepsilon,1+\varepsilon) \subset \mathbb{R}^{n+1}$ is an open neighbourhood of $a$. From the regularity of stratification: $T_{y_0}S_{i_{y_0}} \subset \liminf_{m\to \infty} T_{y_{m }}S_{j}$ and therefore $\tilde{U} \cap T_{y_{m}}S_{j} \neq \emptyset$ for every $m \ge m_0$. Let $a_m = (b_m, t_m) \in \tilde{U} \cap T_{y_{m}}S_{j}$ for every $m \ge m_0$. Let us consider the sequence $a_{m}' = (\frac{1}{t_m} b_m, 1), \ m \ge m_0$. We know that 
$$\frac{1}{t_m} b_m \in \frac{1}{t_m}V \subset \frac{1}{1-\varepsilon}V \subset 2V$$
 and therefore $a_{m}' \in (2V\times (1-\varepsilon,1+\varepsilon)) \cap A_m \subset U \cap A_m$ for every $m \ge m_0$. We have obtained that $A \subset \liminf_{m\to\infty} A_m$, which verifies (\ref{dist}) and finishes the proof that $V$ is upper semicontinuous.

\textit{\textbf{Step III.} Proving that the inclusion with constrained convexified right-hand side has a solution}

Let us consider the constrained differential inclusion with convex right-hand side
\begin{equation} \label{convexified}
\begin{array}{l}
    \dot y(\tau) \in \overline{\rm co}\left( V(y(\tau))\right)\\
    y(0) =(x_0,0) \in C(0)\times \{ 0\}\\
    y(\tau) \in K \ .
\end{array}
\end{equation}
We are going to prove it has a solution by checking a sufficient condition for the weak invariance of $K$ with respect to the inclusion and then applying Theorem 2.10 on p.193 from \cite{Clarke} (see also the viability theorem in \cite{Veliov}).

It is straight-forward to check that the right-hand side of the inclusion is upper semicontinuous. Obviously, the right-hand side of the inclusion is nonempty convex compact valued, so it remains to justify the following

\begin{proposition} \label{claim} For every $y_0=(x_0,t_0) \in K$ there exists a feasible velocity $v_0\in V(x_0,t_0)$, such that $v_0 \in T_K(y_0)$.
\end{proposition}

To this end, we are going to use Lemma 3.5 from \cite{GeorgRibarska} and the following lemma.

\begin{lemma} \label{catchUpLemma}
Let $y_0\in K$. Then any vector $v_0$ belonging to the metric projection of $d_1(y_0)$ on $T_K(y_0) \cap \{t=1\}$ has the property that $d_1(y_0) - v_0\in (Pr(N_K(y_0), 0)$.
\end{lemma}

\begin{proof}[Proof of the Lemma.] According to Proposition 6.27(a) on p.219 from \cite{RockWets}: $N_{T_K(y_0)}(0) = \bigcup_{v \in T_K(y_0)} N_{T_K(y_0)}(v) \subset N_K(y_0)$. Hence, it is enough to prove that $d_1(y_0) - v_0\in (Pr(N_{T_K(y_0)}(v_0)), 0).$

We denote $T := T_K(y_0)$ and $T_1 := T \cap \{t=1\}$. Let us examine the $n$-dimentional ball
$$A := \overline{B} \left(d_1(y_0), \mbox{dist}(d_1(y_0), T_1)\right) \cap \{t=1\}$$
and the cone $C := \{y \in \mathbb{R}^{n+1} \ | \ y = \alpha(y_1-y_0), \ \alpha >0, \ y_1 \in A\} \, .$
Since $\mbox{bdry} \, C \: \setminus \: \{\mathbf{0}\}$ is a $\mathit{C}^2$ surface, all normal vectors to it are proximal normals as well (Proposition 1.9 on p.26 from \cite{Clarke}).

Also, we know that
$$T \cap C = \{y \in \mathbb{R}^n \ | \ y = \alpha v_1, \ \alpha \ge 0, \ v_1 \in Proj_{T_1} d_1(y_0)\} \, .$$
Thus we have $v_0 \in T \cap C$. Let $\xi$ be a normal vector to $\mbox{bdry} \, C$ at $v_0$, pointing to the inside of the cone. Then $\xi$ is a proximal normal to $\overline{\mathbb{R}^{n+1} \setminus C}$ at $v_0$. Hence, there exists $r>0$, such that
$$\overline{B}_r(v_0+r\xi) \cap (\overline{\mathbb{R}^{n+1} \setminus C}) = \{v_0\} \ .$$

Since $v_0 \in T \subset \overline{\mathbb{R}^{n+1} \setminus C}$, $\, \overline{B}_r(v_0+r\xi) \cap T = \{v_0\}$ and therefore $\xi \in N_T(v_0)$. Hence $Pr \xi = \beta (d_1(y_0) - v_0), \ \beta > 0 \, .$ Because $\overline{B}_r(v_0+r\xi) \cap \{t=1\}$ is a $n$-dimensional ball with positive radius, whose intersection with $\overline{(\mathbb{R}^{n+1} \cap \{t=1\}) \setminus A}$ is $v_0$, we obtain that $Pr \xi = \beta (d_1(y_0) - v_0), \ \beta > 0.$ Since $N_T(v_0)$ is a cone, $d_1(y_0) - v_0 \in (Pr(N_{T}(v_0)), 0)$.
\end{proof}

\begin{proof}[Proof of Proposition \ref{claim}.] Let $v_0 = (w_0, 1)$ belong to the metric projection of $d_1(y_0)$ on $T_K(y_0) \cap \{t=1\}$. From Lemma \ref{catchUpLemma}, it follows that $d_1(y_0) - v_0\in (Pr(N_{T_K(y_0)}(0)), 0).$ If we justify that $\|d_1(y_0) - v_0\| \le L(y_0)$, the Proposition would be proven, because then $v_0$ would belong to $V(y_0)$.

We know that $L(y_0) = \min(\mbox{dist}(d_1(y_0),T_{y}S_{i_{y }} \cap \{t=1\}), \|Pr(d_1(y_0))\| + L)$. According to Example 6.8 on p.203 from \cite{RockWets}, $T_{y_0}S_{i_{y_0}}$ coincides with the Bouligand tangent cone to $S_{i_{y_0}}$. From $S_{i_{y_0}} \subset K$ we obtain that 
$$T_{y_0}S_{i_{y_0}}  \cap \{t=1\} \subset T_K(y_0)  \cap \{t=1\}$$
and therefore:
$$\|d_1(y_0) - v_0\| = \mbox{dist}(d_1(y_0),T_K(y_0) \cap \{t=1\}) \le \mbox{dist}(d_1(y_0), T_{y_0}S_{i_{y_0}} \cap \{t=1\}) \, .$$
On the other hand
$$\|d_1(y_0) - v_0\| \le \|d_1(y_0) - v_+\| = \|Pr(d_1(y_0)) - w_+\| \le \|Pr(d_1(y_0))\| + L \, ,$$
where $v_+=(w_+,1) \in T_K(y_0)$ is from Lemma 3.5 from \cite{GeorgRibarska}.

We have proven that $\|d_1(y_0) - v_0\| \le L(y_0)$ and therefore $v_0\in V(y_0)$.
\end{proof}

\textit{\textbf{Step IV.} Proving that each solution of the inclusion with constrained convexified right-hand side is a solution to the inclusion with constrained nonconvexified right-hand side}

Let $y(t)$, $t\in [0,T]$, be a solution to the differential inclusion with constrained convexified right-hand side (\ref{convexified}). We are going to prove that $\dot y(t)\in V(y(t))$ for almost all $t\in [0,T]$.

Let $t_0\in [0,T]\setminus \hat T$ be such that $\dot y(t_0)$ exists, $\dot y(t_0)\in \overline{\rm co} \ V(y(t_0))$ and $t_0$ is a cluster point of $$T' := \{ t\in [0,T]: \ y(t)\in S_{i_{y(t_0)}}\} \ .$$
Since $\hat T$ is at most countable,  $y(t)$ is a solution to (\ref{convexified}) and because the set $\{ t\in [0,T]: \ y(t)\in S_{i}\} $ may have only countably many isolated points for every $i\in I$ and $I$ is finite, almost all elements of $[0,T]$ satisfy the above requirements.

Then $\dot y(t_0)$ must belong to $T_{y(t_0)}S_{i_{y(t_0)}} \, ,$ thus $T_{y(t_0)}S_{i_{y(t_0)}}\cap \ \overline{\rm co} \ V(y(t_0))\not = \emptyset$. But $V(y(t_0))$ and therefore $\overline{\rm co}V(y(t_0))$ are contained in the set $\{ (w,1): \ \|d_1(y_0) - w\| \le L(y(t_0)) \} \, .$
Also,
$$\{ (w,1): \ \| Pr d_1(y_0)-w\| \le L(y(t_0)) \} \ \cap \ T_{y(t_0)}S_{i_{y(t_0)}} \subset \left\{ Proj_{T_{y(t_0)}S_{i_{y(t_0)}} \cap \{t=1\}} d_1(y_0)\right\} \, .$$

From $T_{y(t_0)}S_{i_{y(t_0)}}\cap \ \overline{\rm co} \ V(y(t_0))\not = \emptyset$ we obtain that
$$T_{y(t_0)}S_{i_{y(t_0)}}\cap \ \overline{\rm co} \ V(y(t_0))= \left\{ Proj_{T_{y(t_0)}S_{i_{y(t_0)}} \cap \{t=1\}} d_1(y_0)\right\} \, .$$

If we assume that $T_{y(t_0)}S_{i_{y(t_0)}}\cap \ V(y(t_0))= \emptyset$, then
\begin{equation}
\begin{array}{l}
 \mathbf{0} \notin V(y(t_0)) - Proj_{T_{y(t_0)}S_{i_{y(t_0)}} \cap \{t=1\}} d_1(y(t_0)) \subset  \\
\subset \left(d'(y(t_0)) - (Pr (N_K(y(t_0))),0) \right) \cap \left( \overline{B}_{L(y(t_0))} \left( d'(y(t_0)) \right) \right) \, ,
\end{array}
\end{equation}
where $d'(y(t_0)) := d_1(y(t_0)) - Proj_{T_{y(t_0)}S_{i_{y(t_0)}} \cap \{t=1\}} d_1(y(t_0))$ and $\|d'(y(t_0))\| \le L(y(t_0)) \, .$

Thus, there exists $\varepsilon > 0$, such that
$$B_\varepsilon (0) \cap \left( V(y(t_0))- Proj_{T_{y(t_0)}S_{i_{y(t_0)}} \cap \{t=1\}} d_1(y(t_0))\right)=\emptyset \, .$$

From the uniform convexity of the ball in $\mathbb{R}^n$ it follows that there exist a continuous linear functional $\varphi$ and a positive real number $\alpha$, such that
$$ \mbox{diam}\left\{ y\in \overline{B}_{L(y(t_0))} \left( d'(y(t_0)) \right): \ \varphi (y)< \alpha \right\} < \varepsilon \, .$$
Then
$$ V(y(t_0))- Proj_{T_{y(t_0)}S_{i_{y(t_0)}} \cap \{t=1\}} d_1(y(t_0)) \subset \left\{y\in \mathbb{R}^n : \ \varphi(y) \ge \alpha \right\}$$
and therefore
$$ \overline{\mbox{co}} \ V(y(t_0))\subset Proj_{T_{y(t_0)}S_{i_{y(t_0)}} \cap \{t=1\}} d_1(y(t_0)) +\left\{y\in \overline{B}_{L(y(t_0))} \left( d'(y(t_0)) \right) : \ \varphi(y) \ge \alpha \right\} \, ,$$
which is a contradiction to $Proj_{T_{y(t_0)}S_{i_{y(t_0)}} \cap \{t=1\}} d_1(y(t_0)) \in \overline{\mbox{co}} \  V(y(t_0)) \, .$

We obtain that $$T_{y(t_0)}S_{i_{y(t_0)}}\cap  \ V(y(t_0))= \left\{ Proj_{T_{y(t_0)}S_{i_{y(t_0)}} \cap \{t=1\}} d_1(y(t_0)) \right\} \, ,$$ which proves $\dot y(t_0) \in V(y(t_0))$. Therefore $\dot y(t)\in V(y(t))$ for almost all $t\in [0,T]$ and $y$ is an absolutely continuous mapping with values in $K$.

It is clear that $y(t)=(x(t), t)$ for all $t\in [0,T]$, since the last coordinate of the right-hand side is the constant $1$. Then $x(t)\in C(t)$, because $y(t)\in K,$ $t\in [0,T]$. If $t\in [0,T]\setminus \hat T$ and $\dot y(t)\in V(y(t))$, we obtain that
$$\dot x(t) \in Pr(d_1(x(t), t)) - Pr(N_K(x(t),t)) = d(x(t))-N_{C(t)}(x(t)) \, .$$

Therefore $x(t)$, $t\in [0,T]$, is a solution of the sweeping process (\ref{sweepPert}).
\end{proof}

\vspace{0.5cm}

\begin{theorem} \label{thProj}
Let the multivalued mapping $C: [0,T] \rightrightarrows \mathbb{R}^n$ be
Lipschitz (with respect to the Hausdorff distance), definable in some o-minimal structure, its values $C(t)$ be nonempty and
compact and the mapping $d:\mathbb{R}^{n+1} \to \mathbb{R}^n$ be continuous. Then the projection process
\begin{equation*}
\begin{array}{l} \dot x(t) \in Pr \ G(d(x(t), t), x(t), t)\\ x(0) = x_0
\in C(0)\\ x(t) \in C(t) \, ,\end{array}
\end{equation*}
where the multivalued mapping $G$ is obtained by closing the graph of
$$(d,x,t)\mapsto Proj_{T_K(x,t)\cap \{ t=1\}}(d), \ \ K:= \{ (x,t)\in R^{n+1}: \ x\in C(t)\} \, ,$$
has a solution.
\end{theorem}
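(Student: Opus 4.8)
The plan is to repeat the four-step scheme of the proof of Theorem \ref{main}, but with the constrained normal field $V$ replaced throughout by the projection field $G$ itself. Steps I--III then go through almost unchanged, and in fact more easily; the genuinely new content is isolated in the de-convexification (Step IV), where one must promote the velocity produced by the viability argument from a value of $\overline{\rm co}\,G$ to a value of $G$.

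I would reuse Step I verbatim: writing $y=(x,t)\in K$ and $d_1(y)=(d(x,t),1)$, the projection process is equivalent to the autonomous inclusion $\dot y(\tau)\in G(d_1(y(\tau)),y(\tau))$ with $y(\tau)\in K$, whose right-hand side takes values in $T_K(y)\cap\{t=1\}$; in particular every admissible velocity has last coordinate $1$, so $y(\tau)=(x(\tau),\tau)$ is automatic and projecting by $Pr$ will recover a solution of the original problem. Since $K$ is compact and $G$ is by definition the closure of a graph, $G$ has closed graph and is therefore upper semicontinuous; its values are nonempty because the set $T_K(y)\cap\{t=1\}$ onto which we project is nonempty by Lemma 3.5 of \cite{GeorgRibarska}, and they are uniformly bounded because $d_1$ is bounded on the compact $K$ while the same lemma supplies a tangent vector of controlled length at each point. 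Hence $\overline{\rm co}\,G(d_1(\cdot),\cdot)$ is upper semicontinuous with nonempty convex compact values. The weak-invariance hypothesis of Theorem 2.10 on p.193 of \cite{Clarke} (or of the viability theorem in \cite{Veliov}) is here immediate and replaces the work of Proposition \ref{claim}: for every $y_0\in K$ the vector $Proj_{T_K(y_0)\cap\{t=1\}}(d_1(y_0))$ belongs to $G(d_1(y_0),y_0)$ already before the closure is taken, and it lies in $T_K(y_0)\cap\{t=1\}\subset T_K(y_0)$; thus $G(d_1(y_0),y_0)\cap T_K(y_0)\neq\emptyset$, the tangency condition holds automatically, and the cited theorem produces an absolutely continuous $y(\cdot)$ with $y(\tau)\in K$ and $\dot y(\tau)\in\overline{\rm co}\,G(d_1(y(\tau)),y(\tau))$ a.e.

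The crux is the de-convexification. Exactly as in Step IV, for almost every $\tau_0$ the instant $\tau_0$ is a cluster point of $\{\tau:\ y(\tau)\in S_{i_{y(\tau_0)}}\}$, so $\dot y(\tau_0)\in T_{y(\tau_0)}S_{i_{y(\tau_0)}}$, and the separation/uniform-convexity argument of Step IV shows that the only candidate in $\overline{\rm co}\,G(d_1(y(\tau_0)),y(\tau_0))\cap T_{y(\tau_0)}S_{i_{y(\tau_0)}}$ is the stratum projection $Proj_{T_{y(\tau_0)}S_{i_{y(\tau_0)}}\cap\{t=1\}}(d_1(y(\tau_0)))$. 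What must still be proved --- and this I expect to be the main obstacle --- is the key lemma that this stratum projection is itself a value of $G$, i.e.\ that $Proj_{T_{y_0}S\cap\{t=1\}}(d_1(y_0))\in G(d_1(y_0),y_0)$ for $y_0$ in a stratum $S$. The difficulty is that $T_{y_0}S\subsetneq T_K(y_0)$ in general, so the stratum projection need not equal the full tangent-cone projection at $y_0$; the closure defining $G$ is precisely what bridges the gap. I would realize the stratum projection as a limit $\lim_m Proj_{T_K(y_m)\cap\{t=1\}}(d_1(y_m))$ with $y_m\to y_0$, choosing $y_m$ in the adjacent strata $S_j$ (those with $S\subset\overline{S_j}$) supplied by the Whitney condition, along which the half-space--like tangent cones clamp the projected drift back onto the $S$-direction; definability enters to guarantee, via curve selection and the monotonicity theorem, both the existence of such a definable approximating arc and the convergence of the projections along it.

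Granting the lemma, $\dot y(\tau_0)\in G(d_1(y(\tau_0)),y(\tau_0))$ for almost every $\tau_0$, so $y(\cdot)$ solves the non-convexified inclusion; applying $Pr$ then yields $\dot x(\tau)\in Pr\,G(d(x(\tau),\tau),x(\tau),\tau)$ a.e.\ with $x(\tau)\in C(\tau)$, which is the desired solution of the projection process. Since membership in $G$ is strictly stronger than membership in $d-N_{C}$, this argument simultaneously refines the conclusion of Theorem \ref{main}.
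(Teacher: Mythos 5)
Your overall strategy (run the viability theorem on $\overline{\rm co}\,G$ directly, then de-convexify) is genuinely different from the paper's, and it correctly trivializes Proposition \ref{claim} and Lemma \ref{catchUpLemma}. But the decisive step is missing, and the lemma you propose to fill it with is \emph{false} as stated. Take $C(t)\equiv[0,1]$, so $K=[0,1]\times[0,T]$, let $y_0=(0,t_0)$ with $0<t_0<T$, whose stratum is $S=\{0\}\times(0,T)$, and take a constant drift $d>0$. At nearby interior points $T_K(y)\cap\{t=1\}=\mathbb{R}\times\{1\}$ and the projection of $(d,1)$ is $(d,1)$; at nearby edge points $T_K(y)\cap\{t=1\}=[0,\infty)\times\{1\}$ and the projection of $(d,1)$ is again $(d,1)$ since $d>0$. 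Hence $G((d,1),y_0)=\{(d,1)\}$, while the stratum projection $Proj_{T_{y_0}S\cap\{t=1\}}(d,1)=(0,1)$ does not belong to it. So no curve-selection or Whitney ``clamping'' argument can prove your unconditional lemma; what is true (and what you actually need) is the conditional statement at points where the stratum projection is already known to lie in $\overline{\rm co}\,G(d_1(y_0),y_0)$ --- and your sketch makes no use of that hypothesis, so it cannot succeed.

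For contrast, the paper never faces this issue because it does not solve the convexified $G$-inclusion: it takes the solution of Theorem \ref{main}, so that a.e.\ derivative comes equipped with the representation $\dot y(\bar t)=d_1(y_0)-\eta_0$, $\eta_0\in(Pr\,N_K(y_0),0)$; it then approximates the limiting normal by proximal normals $\zeta_m\in\widehat N_K(y_m)$, $y_m\to y_0$, and proves via (\ref{distvm}), the identities (\ref{scalar1})--(\ref{scalar2}), $\lambda_m\to 1$, and the estimates (\ref{ineq1})--(\ref{ineq2}) that the genuine projections $w_m=Proj_{T_K(y_m)\cap\{t=1\}}d_1(y_m)$ converge to $\dot y(\bar t)$, whence $\dot y(\bar t)\in G$ by the very definition of $G$ as a graph closure. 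Your route can be repaired within your own framework, in two steps. First, prove the ball bound $G(d_1(y_0),y_0)\subset \overline{B}_{L'}(d_1(y_0))\cap\{t=1\}$ with $L'=\mbox{dist}(d_1(y_0),T_{y_0}S_{i_{y_0}}\cap\{t=1\})$; this is the analogue of (\ref{dist}) and requires the Whitney $\liminf$ argument --- for $V$ it held by construction, for $G$ it is a claim needing proof, which you silently assume when you say the Step IV argument applies ``exactly.'' Second, observe that the uniform-convexity cap argument, applied to the closed set $G(d_1(y_0),y_0)$ itself rather than to $V$, then shows that the stratum projection --- which lies in $\overline{\rm co}\,G$ because it equals $\dot y(\tau_0)$ --- must lie in $G$; this replaces your false lemma. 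With these two repairs your argument closes and is arguably more self-contained than the paper's; as submitted, it has a genuine gap at its central point.
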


\begin{proof} Let $y(t) \, , t \in [0,T]$ be the solution of the sweeping process (\ref{sweepPert}), constructed in the proof of Theorem \ref{main}. We are going to use the same notations as in the previous proof.

Let $\dot y(\bar t) \, , \bar t \in [0,T]$ exist and belong to 
$$T_{y(\bar t)}S_{i_{y(\bar t)}}\cap  \ V(y(\bar t)) = \left\{ Proj_{T_{y(\bar t)}S_{i_{y(\bar t)}} \cap \{t=1\}} d_1(y(\bar t)) \right\} \, . $$
Note that it is true for almost all $t$, according to the proof of the above theorem. Then $\dot y(\bar t)=d_1(y(\bar t))-\eta_0$ and $\eta_0 \in (Pr N_K(y(\bar t)), 0)$. Let $\zeta_0 = (\eta_0, t_0) \in N_K(y(\bar t))$. Therefore there exist $y_m\longrightarrow_{m\to \infty} y(\bar t)$, $\{y_m\}\subset K$, $\zeta_m = (\eta_m, t_m) \longrightarrow_{m\to \infty} \zeta_0$ with $\zeta_m \in \widehat{N}_K(y_m)$. Let us put $$v_m:= d_1(y_m)-\eta_m \mbox{ and } w_m:= Proj_{T_K(y_m)\cap \{t=1\}}d_1(y_m).$$
It is clear that $v_m \longrightarrow d_1(y(\bar t))-\eta_0=\dot y(\bar t) =: v_0$ and $w_m \in G(y_m)$. If we can show
that $\| v_m - w_m\| \longrightarrow_{m\to \infty} 0$, we are done, because then $(y_m, w_m)\in Graph (G)$, $(y_m,w_m)\longrightarrow_{m\to \infty} (y(\bar t),\dot y(\bar t))$.

Let us denote by $S_j$, $j\in J$, all strata such that $S_{i_{y_0}}\subset \overline{S_j} \, ,$ where $y_0 := y(\bar t) \, .$ The sequence $\{y_m\}_{m=1}^\infty$ can be split into finitely many subsequences (or finite sets) such that any of them is contained in one stratum $S_j$ for some $j\in J$ or in $S_{i_{y_0}}$. Thus, without loss of generality, we may assume that $\{ y_{m}\}_{m=1}^\infty$ is contained
in one stratum. If it is $S_{i_{y_0}}$, clearly ${\rm dist} \left( v_m, T_{y_m}S_{i_{y_0}}\cap \{t=1\}\right)\longrightarrow_{m\to \infty} \mbox{dist}(v_0,T_{y_0}S_{i_{y_0 }} \cap \{t=1\}) =0 \, .$  If $\{ y_{k}\}_{m=1}^\infty$ is contained in $S_j$ for some $j\in J$, analogously to (\ref{dist}), we have
$$ \mbox{dist}(v_0,T_{y_0}S_{i_{y_0 }} \cap \{t=1\}) \ge \limsup_{m\to \infty} \mbox{dist}(v_m,T_{y_{m }}S_{ j} \cap \{t=1\}) \, ,$$
so ${\rm dist} \left( v_m, T_{y_m}S_{j}\cap \{t=1\}\right)\longrightarrow_{m\to \infty} 0$ again.
According to Example 6.8 on p.203 from \cite{RockWets}, $T_{y_0}S_{i_{y_0}}$ coincides with the Bouligand tangent cone to $S_{i_{y_0}}$. From $S_{j} \subset K$ we have that 
$$T_{y_m}S_{j}\cap \{t=1\} \subset T_K(y_m)\cap \{t=1\}$$
and therefore:
\begin{equation}\label{distvm}
{\rm dist} \left( v_m, T_K (y_m)\cap \{t=1\}\right) \le {\rm dist} \left( v_m, T_{y_m}S_{j}\cap \{t=1\}\right) \longrightarrow_{m\to \infty} 0 \, .
\end{equation}

We know that $\zeta_0 = \eta_0 + t_0 . e_t \, ,$ where $e_t = (\mathbf{0}, 1) \in \mathbb{R}^{n+1} \, ,$ and $\eta_0 \, \bot \, e_t$. Hence, $\langle d_1(y_0) - \zeta_0, \zeta_0 \rangle = \langle d_1(y_0) - \eta_0 - t_0 . e_t , \zeta_0 \rangle = \langle d_1(y_0) - \eta_0, \zeta_0 \rangle - t_0^2 \, .$
The regularity of the stratification implies $$N_{y(t_0)}S_{i_{y(t_0)}} \supset \limsup_{m\to \infty} N_{y_{m}}S_{j}$$ whenever $\{y_m\}_{m=1}^\infty \subset S_j$ tends to $y(t_0)$. Therefore, $N_K(y(t_0)) \subset N_{y(t_0)}S_{i_{y(t_0)}}$ (proximal normals to $K$ at any point belong to the normal space at the same point to the stratum, to which the point belongs -- Example 6.8 on p.203 in \cite{RockWets}). Therefore, $\langle d_1(y_0) - \eta_0, \zeta_0 \rangle = 0$ (because $d_1(y_0)-\eta_0 \in  T_{y_0}S_{i_{y_0}}$ and $\zeta_0 \in N_K(y(t_0)) \subset N_{y(t_0)}S_{i_{y(t_0)}}$) and so
\begin{equation}\label{scalar1}
\langle d_1(y_0) - \zeta_0, \zeta_0 \rangle = -t_0^2 \, .
\end{equation}
On the other hand, $\langle d_1(y_0) - \zeta_0, \zeta_0 \rangle = \langle d_1(y_0) - \eta_0, \eta_0 \rangle + t_0(1-t_0) \, .$ Therefore,
\begin{equation}\label{scalar2}
\langle d_1(y_0) - \eta_0, \eta_0 \rangle = -t_0 \, .
\end{equation}

Let $d_1(y_m)+\frac{t_m}{\|\eta_m\|^2} . \eta_m =\lambda_m . \eta_m +u_m$ where $\lambda_m \ge 0$ and $\langle u_m, \eta_m \rangle \le 0$ (moreover, let $\langle u_m ,  \eta_m  \rangle =0$ if $\lambda_m > 0$), i.e. 
$$d_1(y_m) =\lambda_m . \eta_m + u'_m  \, ,$$
where $u'_m = u_m - \frac{t_m}{\|\eta_m\|^2} . \eta_m \, .$
If $\lambda_m = 0$, then $\langle d_1(y_m) , \eta_m \rangle =\lambda_m \|\eta_m\|^2 + \langle u_m, \eta_m \rangle - t_m \le -t_m$ and therefore $\langle d_1(y_m) , \zeta_m \rangle \le 0$, which leads to $\langle d_1(y_0) , \zeta_0 \rangle \le 0$ as $m \to \infty$. From (\ref{scalar1}) we obtain that 
$$0 \ge \langle d_1(y_0), \zeta_0 \rangle = \langle d_1(y_0) - \zeta_0, \zeta_0 \rangle + \|\zeta_0\|^2 = -t_0^2 + \|\zeta_0\|^2 = \|\eta_0\|^2 \, , $$
hence $\eta_0 = \mathbf{0}$. Then $d_1(y_0) = v_0$ and from $T_{y_0}S_{i_{y_0}} \subset T_K(y_0)$ it follows that $d_1(y_0) \in T_K (y_0)\cap \{t=1\}$ and therefore $v_0 = d_1(y_0) = Proj_{T_K(y_0)\cap \{t=1\}}d_1(y_0) \, ,$ so $\dot y (t_0) \in G(y_0) \, .$

Thus, without loss of generality $\lambda_0 \not = 0$ and $\langle u_m ,  \eta_m  \rangle =0$. Then $\lambda_m \ne 0, \, \eta_m \ne \mathbf{0}$ as well for $m$ large enough and
$$ \lambda_m  = \frac{ \langle d_1(y_m), \eta_m \rangle}{\|\eta_m\|^2} +  \frac{t_m}{\|\eta_m\|^2} \longrightarrow_{m\to \infty} \frac{ \langle d_1(y_0), \eta_0 \rangle}{\|\eta_0\|^2} + \frac{t_0}{\|\eta_0\|^2} \, . $$
From (\ref{scalar2}) we have that 
$$ \frac{ \langle d_1(y_0), \eta_0 \rangle}{\|\eta_0\|^2} =\frac{ \langle d_1(y_0)-\eta_0, \eta_0 \rangle + \|\eta_0\|^2}{\|\eta_0\|^2}=-\frac{t_0}{\|\eta_0\|^2} +1 \, . $$
and we obtain $\lambda_m \longrightarrow_{m\to \infty} 1 \, .$
Thus
\begin{equation} \label{distuv}
u'_m - v_m =d_1(y_m) - \lambda_m . \eta_m -d_1(y_m) +\eta_m = (1-\lambda_m)\eta_m\longrightarrow_{m\to \infty} \mathbf{0} \, ,
\end{equation}
because $\|\eta_m\| \le L + \|d(y_m)\| \, ,$ which is bounded.
So, we shall be done if we prove that $u'_m - w_m \longrightarrow_{m\to \infty} \mathbf{0}$.

Because of $\langle w_m, \zeta_m\rangle \le 0$, we have $\langle w_m, \eta_m\rangle \le -t_m$ and therefore
\begin{equation} \label{ineq1}
\begin{array}{l}
\| w_m - d_1(y_m)\|^2=\langle w_m - \lambda_m  \eta_m -u'_m, w_m - \lambda_m  \eta_m -u'_m\rangle = \\
= \| w_m - u'_m\|^2 +\| \lambda_m  \eta_m\|^2 - 2\langle w_m - u'_m,\lambda_m  \eta_m \rangle =\\
=\| w_m - u'_m\|^2 +\| \lambda_m  \eta_m\|^2 - 2\lambda_m  (\langle w_m , \eta_m \rangle +t_m) \ge\\
\ge\| w_m - u'_m\|^2 +\| \lambda_m  \eta_m\|^2 \, .
\end{array}
\end{equation}

Putting $$\varepsilon_m:=\| (1-\lambda_m)\eta_m \|+{\rm dist}\left( v_m, T_K(y_m) \cap \{t=1\}\right)\longrightarrow_{m\to \infty}0$$ (due to (\ref{distuv}) and the construction of the solution), we obtain
\begin{equation} \label{ineq2}
\begin{array}{l}
\| w_m - d_1(y_m)\| = {\rm dist}\left( d_1(y_m), T_K(y_m) \cap \{t=1\}\right) \le \\
 \le \| d_1(y_m) - u'_m \| + {\rm dist}\left( u'_m, T_K(y_m) \cap \{t=1\}\right) \le \\
 \le \| \lambda_m  \eta_m\| + \|u'_m - v_m\| + {\rm dist}\left(v_m, T_K(y_m) \cap \{t=1\}\right) \le \\
 \le \| \lambda_m  \eta_m\| + \| (1-\lambda_m)\eta_m \| + {\rm dist}\left(v_m, T_K(y_m) \cap \{t=1\}\right) = \\
 = \| \lambda_m  \eta_m\| +\varepsilon_m
\end{array}
\end{equation}
using the triangle inequality and (\ref{distuv}).


Inequalities (\ref{ineq1}) and (\ref{ineq2}) yield
\begin{equation}
\begin{array}{l}
\| w_m - u'_m\| \le \sqrt{\| w_m - d_1(y_m)\|^2-\| \lambda_m  \eta_m\|^2} \le \\
 \le \sqrt{\left( \| \lambda_m  \eta_m\| +\varepsilon_m\right)^2-\| \lambda_m  \eta_m\|^2} \longrightarrow_{m\to \infty}0
\end{array}
\end{equation}
which concludes the proof of the theorem.
\end{proof}

\section{Application}

We are going to generalize the crowd motion model, presented in \cite{BerVen}, \cite{BerVen2} and apply the results from the previous section to it. The model rests on two principles. On the one hand, each individual
has a spontaneous velocity that he would like to have in the absence of other people or obstacles. On
the other hand, the actual velocity must take into account congestion and obstacles. Those two principles
lead to define the actual velocity field as the projection of the spontaneous velocity
over the set of admissible velocities (regarding the non-overlapping constraints and the obstacles).

Let us quickly recall the setting of the model without obstacles. A crowd with $N$ people is considered. They are identified to rigid disks with the same radius $r$ (for convenience). The centre of the $i$-th disk is denoted by $x_i \in \mathbb{R}^2$. Since overlapping is forbidden, the vector of positions $\mathbf{x} = (x_1, x_2, \ldots, x_N) \in \mathbb{R}^{2N}$ has to belong to the "set of feasible configurations", defined by
$$ C_0 := \left\{ \mathbf{x} \in \mathbb{R}^{2N} \ : \ D_{ij}(\mathbf{x}) \ge 0, \ i \neq j \right\} \, , $$
where $D_{ij}(\mathbf{x}) = \|x_i - x_j\| - 2r$ is the signed distance between disks $i$ and $j$.
In the model we take for granted the vector of spontaneous velocities
$$d(\mathbf{x}) = (d_1(\mathbf{x}), d_2(\mathbf{\mathbf{x}}), \ldots, d_N(\mathbf{x})) \in \mathbb{R}^{2N} \, .$$
The set of the admissible velocities is exactly $T_{C_0}(\mathbf{x}) \, ,$ the Bouligand tangent cone to $C_0$ at $\mathbf{x} \, :$
$$ T_{C_0}(\mathbf{x}) = \left\{ v \in \mathbb{R}^{2N} \ : \ \forall i<j  \ \ \ D_{ij}(\mathbf{x}) = 0 \ \ \Rightarrow \ \ \langle G_{ij}(\mathbf{x}), \, v \rangle \ge 0 \right\} \, , $$
where
$$G_{ij}(\mathbf{x}) = \nabla D_{ij}(x) = (0, \ldots , 0, -e_{ij}(x), 0, \ldots , 0, e_{ij}(x), 0, \ldots, 0) \in \mathbb{R}^{2N}$$
and $ e_{ij}(\mathbf{x}) = \frac{x_j - x_i}{\|x_j - x_i\|} \, .$ 

Our goal is to generalize the model in a way that movable and immovable obstacles are considered. We are going to allow obstacles that can be described by finitely many analytical equalities and inequalities. As we confine ourselves to compact sets, we remain in the frame of the o-minimal structure of subanalytic sets. This represents essentially all possible real-life cases.   

Let us examine a movable obstacle in the plain, defined by the analytic inequality $P(x, t) \le 0, \, x \in \mathbb{R}^2 \, .$ In order $\mathbf{x}(t)$ to be an admissible configuration at the moment $t$, every $\mathbf{x}' = (x'_1, x'_2, \ldots , x'_N) \in \mathbb{R}^{2N},$ such that $\|x'_i - x_i(t)\|^2 \le r^2$ for some $i = {1,\ldots N} \, ,$ should fulfill $P(x'_i, t) \ge 0 \, .$ Since $\|x'_i - x_i(t)\|^2 \le r^2,  i = {1,\ldots N}$ is an analytic inequality, 
$$\mathbf{x'} \in \mathbb{R}^{2N} \, : \, \|x'_i - x_i(t)\|^2 \le r^2,  i = {1,\ldots N} \Rightarrow P(x'_i, t) \ge 0$$
 is a first order formula and therefore the set of constraints due to $\{P(x, t) \le 0 \}$
\begin{equation}\label{admiss}
\begin{array}{l}
 C_{P(x, t)} := \{\mathbf{x} \in \mathbb{R}^{2N} :
  \forall \mathbf{x}' \in \mathbb{R}^{2N} \, \forall i = {1,\ldots N} \, (\|x'_i - x_i(t)\|^2 \le r^2 \Rightarrow P(x'_i, t) \ge 0) \,  \}
\end{array}
\end{equation}
is definable.

The obstacles that we examine can be described by finitely many inequalities of the type $P(x, t) \le 0$ (the equalities can be presented as 2 inequalities with opposite directions and for the immovable inequalities $P(x, t) := P(x)$). 

Let us examine $m$ movable and immovable obstacles, each of which described by $k$ inequalities of type $P(x, t) \le 0$. They can be presented as one obstacle in the following way:
$$S(x, t) := \bigcup_{i=1}^{m} \left( \bigcap_{j=1}^{k} \, \{ \, P_{ij}(x, t) \le 0 \, \} \right)  \, .$$

The set of the admissible configurations is
\begin{equation}
\begin{array}{l}
C(t) := \left\{ \mathbf{x} \in \mathbb{R}^{2N} \ : \ D_{ij}(\mathbf{x}) \ge 0, \ i \neq j \right\} \cap C_{S(x, t)} =  \\
 \quad \quad = \left\{ \mathbf{x} \in \mathbb{R}^{2N} \ : \ D_{ij}(\mathbf{x}) \ge 0, \ i \neq j \right\} \cap \left( \bigcap_{i=1}^{m} \left( \bigcup_{j=1}^{k} \, C_{P_{ij}(x, t)} \right) \right) \, ,
\end{array}
\end{equation}
where $C_{P_{ij}(x, t)}$ is of type (\ref{admiss}) and therefore is definable. Since definable sets are a closed class regarding the finite intersections and unions, $C(t)$ is definable.

As in the case with lack of obstacles, the set of the admissible velocities is the Bouligand tangent cone $T_{C(t)}(\mathbf{x}(t))$ to $C(t)$ at $\mathbf{x}(t)$ for every $t \ge 0.$

Although at first glance in order to obtain the actual velocities it seems natural to project the spontaneous velocities $d(\mathbf{x}(t))$ on the Bouligand tangent cone to $C(t)$ at $\mathbf{x}(t) \, ,$ this is not the correct approach. In order to obtain the actual velocities, we need to consider the first $n$ coordinates of the projection of $d_1(\mathbf{x}(t)) := (d(\mathbf{x}(t)), 1)$ on the Bouligand tangent cone to $K$ at $(\mathbf{x}(t), t) \, ,$ intersected with $\{t=1\} \, .$ The intuitive explanation is that in order to obtain the actual velocity at the moment $t$, we should project our spontaneous velocity on where we expect the obstacles to be at this moment, not on where they are at the current moment. It is backed up by the following example:

\begin{example} \label{ex}
Let us consider the sweeping process with stationary set $C_0$
\begin{equation} \label{stat}
\begin{array}{l}
    \dot \chi(t) \in d -N_{C_0} (\chi(t))\\
    \chi(0) = \chi_0 \in C_0\\
    \chi(t) \in C_0 \ ,
\end{array}
\end{equation}
and the one with moving set $C(t) := C_0 - t.d$ (this is a translation)
\begin{equation} \label{transl}
\begin{array}{l}
    \dot x(t) \in -N_{C(t)} (x(t))\\
    x(0) = x_0 \in C(0)\\
    x(t) \in C(t) \ ,
\end{array}
\end{equation}
where $C_0$ is closed, $d$ is a constant drift term and $t \in [0, T]$. Then $\chi(t)$ is a solution to (\ref{stat}) if and only if $x(t) = \chi(t)-t.d$ is a solution to (\ref{transl}), because $N_{C_0}(\chi(t)) \equiv N_{C(t)}(x(t)) \, ,$ and $\dot x(t) = \dot \chi(t) - d \in -N_{C_0} (\chi(t)) \, .$ 

As in the proof of Theorem \ref{main}, let us denote
$$ K:=\{(x,t)\in \mathbb{R}^{n+1}: \ t \in [0,T] , \ x\in C(t)\} \, ,$$
and $ y(t) = (x(t), t) \, .$ The following connection between the Bouligand tangent cones to $K$ and $C(t_0)$ can be easily obtained
\begin{equation} \label{cones}
T_K(x_0, t_0) = \{(w, \eta) \in \mathbb{R}^{n+1} \, : \, w \in T_{C_0}(x_0) - \eta . d \} \, .
\end{equation}

Let now $\chi(t)$ be a solution to
\begin{equation} \label{g_pr}
\dot \chi(t)\in G(\chi), \ \  \chi(0)=\chi_0\in C_0, \ \chi(t)\in C_0 \, ,
\end{equation} 
where the graph of the multivalued mapping $G$ is the closure of the graph of the mapping $\chi \mapsto Proj_{T_{C_0}(\chi)}d$ (this is the stationary projection process (\ref{projection_pr_stationary}) with single-valued constant perturbation $d$). The upper-semicontinuity of $N_{C_0}$ and Proposition 6.7 on p.219 from \cite{RockWets} (see also Lemma 3.8 from \cite{GeorgRibarska}) yield that $\chi(t)$ is a solution to (\ref{stat}).
It is straightforward to check that the respective solution $x(t) = \chi(t)-t.d$ to (\ref{transl}) satisfies also
\begin{equation} \label{gr1}
 (\dot x(t), 1) \in Pr G_1((\mathbf{0}, 1), x(t), t) \ \ (x(t_0), 0)= (x_0, 0) \in C(0) \times \{0\} \ \ (x(t), t) \in K \, ,
\end{equation}
where the graph of the multivalued mapping $G_1$ is the closure of the graph of the mapping $(d, x, t) \mapsto Proj_{T_K(y) \cap \{t=1\}} (\mathbf{0}, 1)$ (this is the projection process (\ref{projection_process}) with single-valued constant perturbation $(\mathbf{0}, 1)$) 

Indeed, let $t_0 \in [0, T] \, ,  \ w'_0 :=\dot \chi (t_0) \in G(\chi(t_0)) \, .$ Hence, there exist a sequence $\chi_m$ tending to $\chi(t_0)$ and a sequence $w'_m \in Proj_{T_{C_0}(\chi_m)}d \, ,$ such that $ w'_m \longrightarrow_{m\to \infty} w'_0 \, .$ From (\ref{cones}) it follows that $(w_m, 1) := (w'_m - d, 1) \in T_K(y_m) \cap \{t=1\}$ for all $m$. Let $(w, 1) \in T_K(y_m) \cap \{t=1\}$ for some $m$. Then
$$ \| (w_m, 1) - (\mathbf{0}, 1)\| = \|w_m\| = \|w'_m - d\| \le \|w + d - d \| = \| (w, 1) - (\mathbf{0}, 1) \| \, ,$$
because $w'_m \in Proj_{T_{C_0}(\chi_m)}d$ and $w + d\in T_{C_0}(\chi_m)$. 

We have obtained that $(w_m, 1) \in Proj_{T_K(y_m) \cap \{t=1\}} T_K(y_m) \cap \{t=1\}$ and therefore $(\dot x(t_0), 1) = (\dot \chi(t_0)-d, 1) = (w'_0 - d, 1) \in G_1(y_0) \, .$  This means that the solution $x(t) = \chi(t)-t.d$ to (\ref{transl}) is a solution also to (\ref{gr1}).

Thus (\ref{gr1}) is the natural and only counterpart of the stationary projection process (\ref{g_pr}) if the drift term is single-valued and constant.
\end{example}

Let us go back to the crowd motion model. The vector of the actual velocities $\mathbf{\dot x}(t)$ is obtained as follows:
\begin{equation} \label{model2}
\begin{array}{l}
    \mathbf{\dot x}(t) \in Pr \, Proj_{T_{K}(\mathbf{x}(t))\cap \{t=1\}}(d(\mathbf{x}(t)), 1) \\
    \mathbf{x}(0) = \mathbf{x}_0 \in C(0) \\
    \mathbf{x}(t) \in C(t) \ .
\end{array}
\end{equation}
where $T$ is large enough and
$$ K:=\{(\mathbf{x},t)\in \mathbb{R}^{2n+1}: \ t \in [0,T] , \ \mathbf{x}\in C(t)\} \, .$$
Since $K$ may not be prox-regular, the projection may not be unique and even in very elementary cases the mapping $(d, \mathbf{x}, t) \mapsto Pr \, Proj_{T_{K}(\mathbf{x}, t))\cap \{t=1\}}(d, 1)$ is not upper semi-continuous. Hence, it is natural to look for a mapping (as narrow as possible) which is upper semi-continuous and whose images contain $Pr \, Proj_{T_{K}(\mathbf{x}, t))\cap \{t=1\}}(d(\mathbf{x}), 1)$.

Thus, we are going to look for solutions to
\begin{equation}
\begin{array}{l}
    \mathbf{\dot x}(t) \in Pr \, G(d(\mathbf{x}), \mathbf{x}(t), t) \\
    \mathbf{x}(0) = \mathbf{x}_0 \in C(0) \\
    \mathbf{x}(t) \in C(t) \ ,
\end{array}
\end{equation}
where the graph of the multivalued mapping $G$ is the closure of the graph of the mapping $(d, \mathbf{x}, t) \mapsto Proj_{T_K(\mathbf{x}, t) \cap \{t=1\}}(d, 1) \, .$

Since the spontaneous velocities are continuous, Theorem \ref{thProj} gives the desired solution, provided the obstacles are defined by definable sets and are moving in a definable way.


\begin{thebibliography}{99}

\bibitem{BarWol} Barnard R. C., P. R. Wolenski, Flow Invariance on Stratified Domains, {\it Set-Valued and Variational Analysis},  Volume 21, Issue 2 (2013), 377--403.

\bibitem{BerVen} Bernicot F., J. Venel, Differential inclusions with proximal normal cones  in Banach spaces, {\it J. Convex Analysis}, 17 (2010), 451–-484.

\bibitem{BerVen2} Bernicot F., J. Venel, A discrete contact model for crowd motion, {\it ESAIM: M2AN 45 no. 1}, 17 (2011), 145--168.

\bibitem{BreHong} Bressan, A., Y. Hong, Optimal control problems on stratified domains, {\it Netw. Heterog. Media}, Volume 2, Issue 2,  (2007), 313-–331.

\bibitem{Clarke} Clarke F.H., Y.S. Ledyaev, R.J. Stern, P.R. Wolenski, Nonsmooth Analysis and Control Theory, {\it Springer-Verlag New York} (1998).


\bibitem{Cornet} Cornet B., Existence of slow solutions for a class of differential inclusions, {\it J. Mathematical Analysis and Applications}, Volume 96, Issue 1 (1983), 130-–147.

\bibitem{Coste} Coste M., An introduction to o-minimal geometry, http://perso.univ-rennes1.fr/michel.coste/polyens/OMIN.pdf (1999).

\bibitem{vanDries} Van den Dries L., Tame topology and O-minimal structures, Cambridge University Press (1998).

\bibitem{EdmThib1} Edmond J.F., Thibault L., Relaxation of an optimal control problem involving a perturbed sweeping process, {\it Mathematical Programming}, (2005), Volume 104, Issue 2-3, 347--373.

\bibitem{EdmThib2} Edmond J.F., Thibault L., BV solutions of nonconvex sweeping process differential inclusion with perturbation, {\it J. Differential Equations} 226 (2006), 135–-179.

\bibitem{GeorgRibarska} Georgiev B., N. Ribarska, On Sweeping Process with the Cone of Limiting Normals, {\it Set-Valued Var. Anal} 21 (2013), 673--689.

\bibitem{H1} Henry C., Differential equations with discontinuous right-hand side for planning procedures
, {\it J. Economic Theory} Volume 4, Issue 3 (1972), 545--551.

\bibitem{H} Henry C., An existence theorem for a class of differential equations with multivalued rihgt-hand side, {\it J. Mathematical Analysis and Application} 41 (1973), 179--186.

\bibitem{M1} Moreau J.J., Rafle par un convexe variable I, {\it S\'{e}m. Anal. Convexe Montpellier}, Expos\'{e} 15, (1971).

\bibitem{M2} Moreau J.J., Evolution problem associated with a moving convex set in a Hilbert space, {\it J. Differential Equations} 26 (1977), 347--374.

\bibitem{M3} Moreau J.J., On
Unilateral Constraints, Friction and Plasticity, {\it New Variational Techniques in Mathematical Physics} (1974), 171--322.

\bibitem{RockWets} Rockafellar R.T., R. Wets, Variational Analysis, \textit{Springer, Grundlehren der mathematischen Wissenschaften}, vol. 317, 2nd printing (2004).

\bibitem{Serea} Serea O., On Reflecting Boundary Problem for Optimal Control, {\it SIAM J. Control Optim.}, 42(2) (2003), 559-–575.

\bibitem{Thib} Thibault L., Sweeping process with regular and nonregular sets, {\it J. Differential Equations} 193 (2003), 1–-26.

\bibitem{Veliov} Veliov V., Sufficient conditions for viability under imperfect measurement,
{\it Set-Valued Analysis} 1 (1993), 305--317.

\end{thebibliography}
\end{document}